\newcommand{\bi}{\mathbf{i}}
\newcommand{\bx}{\mathbf{x}}
\newtheorem{theorem}{Theorem}[section]
\newtheorem{definition}[theorem]{Definition}
\newtheorem{prop}[theorem]{Proposition}
\title{A Generalization of the Graham-Pollak Tree Theorem to Even-Order Steiner Distance}
\author{Joshua Cooper and Gabrielle Tauscheck}
\date{\today}
\begin{document}

\maketitle

\begin{abstract}
Graham and Pollak showed in 1971 that the determinant of a tree's distance matrix depends only on its number of vertices, and, in particular, it is always nonzero.  The Steiner distance of a collection of $k$ vertices in a graph is the fewest number of edges in any connected subgraph containing those vertices; for $k=2$, this reduces to the ordinary definition of graphical distance.  Here, we show that the hyperdeterminant of the $k$-th order Steiner distance hypermatrix is always nonzero if $k$ is even, extending their result beyond $k=2$.
Previously, the authors showed that the $k$-Steiner distance hyperdeterminant is always zero for $k$ odd, so together this provides a generalization to all $k$.  We conjecture that not just the vanishing, but the value itself, of the $k$-Steiner distance hyperdeterminant of an $n$-vertex tree depends only on $k$ and $n$. 
\end{abstract}

\section{Introduction}

In an influential 1971 paper, Graham and Pollak (\cite{GrPo71}) computed the determinant of the distance matrix of a $n$-vertex tree, i.e., the $n \times n$ matrix $D$ whose $(i,j)$ entry is the shortest-path distance between $i$ and $j$.  Strikingly, the formula only depends on $n$, and nothing more about the tree:
\begin{equation} \label{eq-1}
\det(D) = (1 - n)(-2)^{n-2}.
\end{equation}
In particular, this quantity is nonzero for any $n \geq 1$.

Steiner distance generalizes the classical notion of distance in a graph $G$ from pairs of vertices to any subset $S \subseteq V(G)$, as the minimum number of edges in any connected subgraph of $G$ containing $S$ (or possibly $\infty$ if there is no such subgraph).  Therefore, partly inspired by recent developments in hypermatrices, Y.~Mao asks (q.v.~\cite{Mao17}) how to generalize (\ref{eq-1}) to hyperdeterminants of order-$k$ Steiner distances in trees.  The present authors showed previously (\cite{CoTa24}) that, for $k$ odd (and $n \geq 3$), the Steiner distance hyperdeterminant is $0$.  In this paper, we show that the Steiner distance is always {\em nonzero} for $k$ even (and $n \geq 2$), implying a weak generalization of the Graham-Pollak Theorem to Steiner distance: for $n \geq 3$, whether the hyperdeterminant vanishes depends only on $k$.  In fact, we conjecture a much stronger statement: the value of the hyperdeterminant (not just whether it vanishes) only depends on $n$ and $k$ in general.  This has been checked computationally for all trees with $(k,n) \in \{(4,4),(4,5),(6,4)\}$ and holds trivially for $n=2$, $n=3$.  The following table shows the common values of the hyperdeterminant, factored into primes.
$$
\begin{array}{l|l}
(n,k) & \det(\mathcal{S}) \\
\hline
\hline
(4,2) & -2^2 \cdot 7 \\
(4,3) & 2^{12} \cdot 7 \cdot 23^4 \\
(4,4) & - 2^{38} \cdot 3^{27} \cdot 5^6 \cdot 7 \cdot 13^{12} \\
(4,5) & 2^{203} \cdot 5^{32} \cdot 7 \cdot 11^{32} \cdot 23^{24} \cdot 37^8 \\
(6,2) & - 11^2 \cdot 31 \\
(6,3) & 2^{14} \cdot 3^{16} \cdot 11^4 \cdot 31 \cdot 19231^4 \\
(6,4) & - 2^{82} \cdot 3^{17} \cdot 11^8 \cdot 31 \cdot 41^{12} \cdot 71^6 \cdot 89^6 \cdot 151^{24} \cdot 257^{24} \cdot 1511^{12} \\
(8,2) & - 2^6 \cdot 29^2 \cdot 127 \\
(8,3) & 2^{56} \cdot 13^{16} \cdot 29^4 \cdot 113^8 \cdot 127 \cdot 1009^8 \cdot 2143^4 
\end{array}
$$

First, some notation and facts about hyperdeterminants are needed.  The following theorem is a useful generalization of the fact that the matrix determinant is the unique irreducible polynomial in the entries of $M$ whose vanishing coincides with the existence of nontrivial solutions to the linear system corresponding to $M$.

\begin{theorem}[\cite{GelKapZel08} Theorem 1.3] \label{thm:hyperdeterminant} The hyperdeterminant $\det(M)$ of the order-$k$, dimension-$n$ hypermatrix $M = (M_{i_1,\ldots,i_k})_{i_1,\ldots,i_k=1}^n$ is a monic irreducible polynomial which evaluates to zero iff there is a nonzero simultaneous solution to $\nabla f_M = \vec{0}$, where
$$
f_M(x_1,\ldots,x_n) = \sum_{i_1,\ldots,i_k} M_{i_1,\ldots,i_k} \prod_{j=1}^k x_{i_j}.
$$
\end{theorem}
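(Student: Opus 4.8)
The plan is to derive all three assertions — irreducibility, monicity, and the gradient-vanishing criterion — from projective duality, rather than from any closed formula. Identify an order-$k$, dimension-$n$ hypermatrix $M$ with a point of $\mathbb{P}(W)$, $W = (\mathbb{C}^n)^{\otimes k}$, and let $X \subseteq \mathbb{P}(W^*)$ be the Segre variety of rank-one tensors $[y^{(1)} \otimes \cdots \otimes y^{(k)}]$, the image of $(\mathbb{P}^{n-1})^k$ under the Segre embedding. The pairing $F_M(y^{(1)},\ldots,y^{(k)}) = \langle M, y^{(1)} \otimes \cdots \otimes y^{(k)}\rangle$ is multilinear, and the single-variable form $f_M$ in the statement is its restriction to the diagonal $y^{(1)} = \cdots = y^{(k)} = x$; by polarization a symmetric $M$ (the case of the Steiner matrix) is recovered from this restriction, so nothing is lost. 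The hyperdeterminant is then, by definition, the defining equation of the dual variety $X^\vee \subseteq \mathbb{P}(W)$, the closure of the set of hyperplanes tangent to $X$ at a smooth point, and the three claims become statements about $X^\vee$.

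First I would establish the vanishing criterion by writing out the embedded tangent space to $X$ at a rank-one point: it is spanned by the tensors obtained from $y^{(1)} \otimes \cdots \otimes y^{(k)}$ by replacing a single factor $y^{(j)}$ with an arbitrary $z$. The hyperplane $H_M$ contains this tangent space iff $\langle M, y^{(1)} \otimes \cdots \otimes z \otimes \cdots \otimes y^{(k)}\rangle = 0$ for every $z$ in every slot, which is precisely the system of first partials of $F_M$; containment of the point itself is automatic (take $z = y^{(j)}$), equivalently by Euler's identity on the symmetric diagonal form. Specializing to the diagonal for symmetric $M$, these partials become the components of $\nabla f_M$ up to the factor $k$, so $M \in X^\vee$ iff $\nabla f_M = \vec{0}$ has a nonzero solution — exactly the stated criterion.

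Next I would prove irreducibility and monicity. For irreducibility, form the conormal variety $\mathrm{Con}(X) = \overline{\{(p,H) : p \in X_{\mathrm{sm}},\ T_p X \subseteq H\}} \subseteq X \times \mathbb{P}(W)$. Projection to the first factor realizes $\mathrm{Con}(X)$ as a projective bundle over the irreducible variety $X$, with fibers the projectivized annihilators of the (equidimensional) tangent spaces; a fibration with irreducible base and irreducible equidimensional fibers is irreducible, so $\mathrm{Con}(X)$ is irreducible of dimension $N-1$, where $N = \dim \mathbb{P}(W)$. Its image $X^\vee$ under the second projection is therefore irreducible, its homogeneous ideal is prime, and once $X^\vee$ is known to be a hypersurface it is cut out by a single irreducible polynomial — the hyperdeterminant. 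Monicity is then just a normalization: I would restrict this polynomial to a coordinate subspace along which it degenerates to a power of an ordinary determinant (or, in the $n=2$ slice, to the discriminant of a binary form), read off a distinguished nonzero coefficient, and rescale to make it $1$.

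The hard part will be showing that $X^\vee$ is genuinely a hypersurface, i.e., that the Segre variety of cubic format $n \times \cdots \times n$ has dual defect zero; otherwise the image of the second projection has codimension greater than one and there is no single defining polynomial at all. Since $\dim \mathrm{Con}(X) = N-1$ always holds, the issue is to show the projection $\mathrm{Con}(X) \to X^\vee$ is generically finite — equivalently, that a generic tangent hyperplane is tangent at only finitely many points. This is where the format enters: cubic format gives $n-1 \le (k-1)(n-1)$ in every slot, the Gelfand--Kapranov--Zelevinsky non-degeneracy inequality, which forces defect zero for $k \ge 2$. I would verify it either by their combinatorial boundary-format criterion or by a direct dimension count on the tangency fibers of the Gauss map, taking care with the boundary case $k=2$, where the dual is the classical determinant hypersurface. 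Pinning down defect zero, together with confirming that the single-variable gradient criterion as written matches tangency for the symmetric hypermatrices at hand, are the two points I expect to demand the most care.
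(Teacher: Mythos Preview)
The paper does not prove this theorem; it is quoted from \cite{GelKapZel08} as background and used as a black box, so there is no proof in the paper to compare against. Your sketch is essentially the standard Gelfand--Kapranov--Zelevinsky argument via projective duality of the Segre variety, and the outline (tangent-space computation for the vanishing criterion, conormal-variety irreducibility, cubical format having defect zero) is sound. One point worth flagging: the theorem as stated in the paper uses the diagonal form $f_M$ and its gradient, which only matches the GKZ tangency condition when $M$ is symmetric; you noticed this and handled it by polarization, which is exactly right for the paper's intended application to the (symmetric) Steiner hypermatrix, but strictly speaking the statement as written is not the literal GKZ theorem for arbitrary $M$.
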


Next, we define Steiner distance.

\begin{definition} Given a graph $G$ and a subset $S$ of the vertices, the {\em Steiner distance} of $S$, written $d_G(S)$ or $d_G(v_1,\ldots,v_k)$ where $S = \{v_1,\ldots,v_k\}$, is the number of edges in the smallest connected subgraph of $G$ containing $S=\{v_1,\ldots,v_k\}$. We often supress the variable $v$ when referring to the vertices within $S$. Since such a connected subgraph of $G$ witnessing $d_G(S)$ is necessarily a tree, it is called a {\em Steiner tree} of $S$. 
\end{definition}

Then, we have a ``Steiner polynomial'' which reduces to the quadratic form defined by the distance matrix for $k=2$.

\begin{definition} Given a graph $G$, the {\em Steiner polynomial} of $G$ is the $k$-form 
$$
p^{(k)}_G(\mathbf{x}) = \sum_{v_1,\ldots,v_k \in V(G)} d_G(v_1,\ldots,v_k) x_1 \cdots x_k
$$
where we often suppress the subscript and/or superscript on $p^{(k)}_G$ if it is clear from context.
\end{definition}

Equivalently, we could define the Steiner polynomial to be the $k$-form associated with the Steiner hypermatrix:

\begin{definition} Given a graph $G$, the {\em Steiner $k$-matrix} (or just ``Steiner hypermatrix'' if $k$ is understood) of $G$ is the order-$k$, cubical hypermatrix $\mathcal{S}_G$ of dimension $n$ whose $(v_1,\ldots,v_k)$ entry is $d_G(v_1,\ldots,v_k)$.
\end{definition}

Throughout the sequel, we write $D_r$ for the operator $\partial / \partial x_r$, and we always assume that $T$ is a tree.

\begin{definition} Given a graph $G$ on $n$ vertices, the {\em Steiner $k$-ideal} -- or just ``Steiner ideal'' if $k$ is clear -- of $G$ is the ideal in $\mathbb{C}[x_1,\ldots,x_n]$ generated by the polynomials $\{D_j p_G \}_{j=1}^n$.  
\end{definition}

Thus, the Steiner ideal is the {\em Jacobian ideal} of the Steiner polynomial of $G$. Equivalently, we could define the Steiner $k$-ideal via hypermatrices. We use the notation $\mathcal{S}(x^{k-1},*)$ to represent $\mathcal{S}(\overbrace{x,x,\ldots, x}^{k-1},*).$ Note that $S(x^{k-1},*) = S(x^{k-2},*,*)x$ where $S(x^{k-2},*,*)$ is an $n\times n$ matrix whose entries are homogeneous polynomials of degree $k-2$. Then the {\em Steiner $k$-ideal} of a graph $G$ on $n$ vertices is the ideal in $\mathbb{C}[x_1, \ldots, x_n]$ generated by the components of $\mathcal{S}(x^{k-1},*)$. 

\begin{definition} A {\em Steiner nullvector} is a point where all the polynomials within the Steiner ideal vanish. The set of all Steiner nullvectors -- a projective variety -- is the {\em Steiner nullvariety}.
\end{definition}

\section{Main Results}
We show that the order-$k$ Steiner distance hypermatrix of a tree $T$ on $n + 1 \geq 2$ vertices has a nonzero hyperdeterminant for even $k$. Therefore, for the remainder of this paper, we assume $T$ is a tree on at least $2$ vertices and $k$ is even. For concision, let $S = \mathcal{S}(x^{k-2},*,*)$ as defined in the introduction, and write $S_u$ for the row of $S$ indexed by $u$ and $S_{u,v}$ for the $v$ entry of $S_u$.  We also employ multi-index notation: for $\bi = (i_1,\ldots,i_r) \in \{0,\ldots, n\}^r$ and $\bx = [x_0,\ldots,x_n]$, write $\bx^{\bi}:= \prod_{j =1}^{r} x_{i_j}$ and $\nu_t(\bi) := \vert\{j:i_j = t\}\vert.$ 

\begin{prop} \label{Arows}
If $e=\{u,u+1\}$ is an edge of the tree $T$ on $n+1$ vertices $\{0,\ldots,n\}$, then $S_u - S_{u+1}$ is a vector of the form $\bx = [x_0, x_1, \ldots, x_n]$ where $x_0 = x_1 = \ldots = x_u = -\left(\sum_{i=0}^ux_i\right)^{k-2}$ and $x_{u+1} = \ldots = x_n = \left(\sum_{i = u+1}^nx_i\right)^{k-2}$ with $0, \ldots, u$ lying in one component of $T' := T - e$ and $u+1, \ldots, n$ lying in the other component. 
\end{prop}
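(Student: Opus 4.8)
The plan is to reduce the whole computation to the standard ``cut'' description of Steiner distance in a tree. For a vertex set $W$ and an edge $f$ of $T$, call $f$ \emph{separating for} $W$ if both components of $T-f$ contain a vertex of $W$; since $T$ is a tree, the (unique) Steiner tree of $W$ is the union of all $W$-to-$W$ paths, and an edge lies in it precisely when it is separating for $W$, so
$$d_T(W) = \sum_{f \in E(T)} \mathbf{1}[\,f \text{ is separating for } W\,].$$
I would record this as a preliminary observation (its proof is one line: deleting a non-separating edge from the minimal spanning subtree leaves $W$ connected, contradicting minimality). I also note at the outset that $d_T$ depends only on the underlying set of its arguments, so $d_T(\bi,u,v)$ means $d_T(\{\bi\}\cup\{u,v\})$, where $\{\bi\}$ denotes the set of entries of the tuple $\bi$.

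Write $A=\{0,\dots,u\}$ and $B=\{u+1,\dots,n\}$ for the vertex sets of the two components of $T'=T-e$, as in the statement. The next step is to expand the $v$-entry of $S_u-S_{u+1}$ as
$$(S_u-S_{u+1})_v=\sum_{\bi\in\{0,\dots,n\}^{k-2}}\bigl(d_T(\bi,u,v)-d_T(\bi,u+1,v)\bigr)\,\bx^{\bi},$$
and substitute the cut formula into each Steiner distance. For every edge $f\neq e$, the vertices $u$ and $u+1$ are adjacent and therefore lie in the same component of $T-f$, so the sets $\{\bi\}\cup\{u,v\}$ and $\{\bi\}\cup\{u+1,v\}$ meet exactly the same components of $T-f$; hence $f$ is separating for one iff for the other, and all $f\neq e$ terms cancel. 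Only $f=e$ survives, and since $u\in A$ and $u+1\in B$, its contribution to the coefficient of $\bx^{\bi}$ is
$$\mathbf{1}[\,(\{\bi\}\cup\{v\})\cap B\neq\emptyset\,]-\mathbf{1}[\,(\{\bi\}\cup\{v\})\cap A\neq\emptyset\,].$$

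Finally I would split on whether $v\in A$ or $v\in B$. If $v\in A$ the second indicator is identically $1$ and the first equals $\mathbf{1}[\,\bi\text{ has an entry in }B\,]$, so $(S_u-S_{u+1})_v=\sum_{\bi}\mathbf{1}[\,\bi\text{ has an entry in }B\,]\,\bx^{\bi}-\sum_{\bi}\bx^{\bi}$; applying the multinomial identities $\sum_{\bi\in\{0,\dots,n\}^{k-2}}\bx^{\bi}=\bigl(\sum_{i=0}^{n}x_i\bigr)^{k-2}$ and $\sum_{\bi\in A^{k-2}}\bx^{\bi}=\bigl(\sum_{i=0}^{u}x_i\bigr)^{k-2}$ turns the first sum into $\bigl(\sum_{i=0}^{n}x_i\bigr)^{k-2}-\bigl(\sum_{i=0}^{u}x_i\bigr)^{k-2}$, and subtracting $\bigl(\sum_{i=0}^{n}x_i\bigr)^{k-2}$ leaves $-\bigl(\sum_{i=0}^{u}x_i\bigr)^{k-2}$, as claimed. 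The case $v\in B$ is symmetric: now the first indicator is identically $1$, and the entry collapses to $\sum_{\bi\in B^{k-2}}\bx^{\bi}=\bigl(\sum_{i=u+1}^{n}x_i\bigr)^{k-2}$. I do not anticipate a genuine obstacle: the one real idea — that every $f\neq e$ term cancels — is forced by the adjacency of $u$ and $u+1$ once the cut formula is in hand, and the remaining work is bookkeeping, the only delicate point being to keep the tuple-versus-set distinction straight, since it is precisely the sums over tuples (with repetition allowed) that collapse to powers of linear forms.
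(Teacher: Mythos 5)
Your proposal is correct. The underlying combinatorial content matches the paper's — in both arguments the coefficient of $\bx^{\bi}$ in $(S_u-S_{u+1})_v$ turns out to be $-1$, $0$, or $+1$ according to which side of the cut $e$ the vertices $v, i_1,\ldots,i_{k-2}$ lie on — but you organize the computation differently. The paper proves directly, by a case analysis on the locations of $\bi$ and of two test vertices $w,w'$, that $d_T(u,w,\bi)-d_T(u+1,w,\bi)$ is constant as $w$ ranges over one component of $T-e$, and then evaluates only the two representative entries $x_u$ and $x_{u+1}$ by identifying which cases contribute nonzero terms. You instead invoke the edge-cut formula $d_T(W)=\sum_{f}\mathbf{1}[f\text{ separates }W]$ for Steiner distance in a tree, observe that every $f\neq e$ contributes identically to $d_T(\bi,u,v)$ and $d_T(\bi,u+1,v)$ because $u$ and $u+1$ stay in one component of $T-f$, and read off each entry uniformly from the surviving $f=e$ term; the collapse of the tuple sums to powers $\bigl(\sum_{i\in A}x_i\bigr)^{k-2}$ and $\bigl(\sum_{i\in B}x_i\bigr)^{k-2}$ is then the same multinomial bookkeeping as in the paper. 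Your route costs one extra (standard, easily proved) lemma but buys a cleaner cancellation and a single computation covering all entries at once, rather than constancy-plus-two-special-cases; both are complete and correct, including the degenerate case $k=2$ where the empty-tuple convention makes the indicators alone give $\mp 1$.
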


\begin{proof}
     Note that 
     \begin{equation} \label{eq0}
     S_{uw} = \sum_{\bi \in V(T)^{k-2}} d_T(u,w,\bi) \bx^\bi.
     \end{equation} 
     It is sufficient to show that for any $w$ and $w'$ that lie within the same component of $T'$, 
     $$
     d_T(u, w, \bi) - d_T({u+1}, w, \bi) = d_T(u, {w'}, \bi) - d_T({u+1}, {w'}, \bi)
     $$ 
     holds for all $i_1,\ldots,i_{k-2} \in V(T)$. For every case, let $d = d_T(u, w, \bi)$ and $d' = d_T(u, {w'}, \bi)$.

     Case 1: Let $i_1,\ldots,i_{k-2}$ lie in the same component of $T'$ as $u$. 
     \begin{enumerate}[(a)]
        \item Assume $w$ and $w'$ also lie within the same component as $u$. Then, 
            $$
            d_T(u, w, \bi) - d_T({u+1}, w, \bi) = d - (d+1) = -1
            $$
            $$
            d_T(u, {w'}, \bi) - d_T({u+1}, {w'}, \bi) = d' - (d' + 1) = -1.
            $$
        \item Assume instead that $w$ and $w'$ lie in the same component as $u+1$. Then,
        $$
        d_T(u, w, \bi) - d_T(u+1, w, \bi) = d - d = 0 
        $$
        $$
        d_T(u, w', \bi) - d_T(u+1, w', \bi)= d' - d' = 0.
        $$
    \end{enumerate}

     Case 2: Let $i_1,\ldots,i_{k-2}$ lie in the same component as $u+1$. 
     \begin{enumerate}[(a)]
         \item Assume $w$ and $w'$ also lie within the same component as $u+1$. Then, 
         $$
         d_T(u, w, \bi) - d_T(u+1, w, \bi) = d - (d-1) = 1 
         $$
         $$
         d_T(u, w', \bi) - d_T(u+1, w', \bi) =d' - (d' - 1) = 1.
         $$

         \item Assume instead that $w$ and $w'$ lie in the same component as $u$. Then, 
         $$
         d_T(u, w, \bi) - d_T(u+1, w, \bi) = d - d = 0 
         $$
         $$
         d_T(u, w', \bi) - d_T(u+1, w', \bi)= d' - d' =0.
         $$
     \end{enumerate}

     Case 3: Suppose some of the vertices among $i_1,\ldots,i_{k-2}$ lie in the same component as $u$ and some lie in the same component as $u+1$. Then, 
     $$
     d_T(u, w, \bi) - d_T(u+1, w, \bi) = d - d = 0
     $$
     $$
     d_T(u, w', \bi) - d_T(u+1, w', \bi)= d' - d'=0.
     $$

     Therefore, the vector $S_u - S_{u+1}$ has equal entries among the coordinates indexed by vertices lying in the same component of $T'$.  To compute these entries, we combine the different cases. Notice that Case 3 contributes zero to the sum in (\ref{eq0}); therefore, we can omit those entries in the calculations. First, consider $x_u = S_{u,u}-S_{u+1,u}$. Since $u$ lies in the same component as itself, we are working with Cases 1(a) and 2(b). Case 2(b) contributes a zero to the summation; therefore, we need only consider the vertices in Case 1(a), giving us $x_u = -\sum_{\bi \in \{0,\ldots,u\}^{k-2}} \bx^\bi = -\left(\sum_{i=0}^ux_i\right)^{k-2}$. 

     Now, consider $x_{u+1} = S_{u,u+1} - S_{u+1,u+1}$. Since $u+1$ lies in the same component as itself, we are working with Cases 1(b) and 2(a). Case 1(b) contributes a zero to the summation; therefore, we need only consider the vertices in Case 2(a), giving us $x_{u+1} = \sum_{\bi \in \{u+1,\ldots,n\}^{k-2}} \bx^\bi = \left(\sum_{i=u+1}^nx_i\right)^{k-2}.$
\end{proof}

Let $\textbf{h}_u(a,b) = [h_0, h_1,\ldots, h_n] \in \mathbb{C}^{n+1}$ be a vector with at most two distinct entries: $h_0 = \ldots = h_u = a$ and $h_{u+1} = \ldots = h_n = b$.

\begin{prop}\label{leaves}
Let $\bx = [x_0, x_1, \ldots, x_n]$ be a Steiner nullvector of a tree $T$. All entries that correspond to leaf vertices $v$ are the same value, equal to the summation of every entry of $\bx$ except $x_v$. 
\end{prop}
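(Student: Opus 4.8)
The plan is to isolate the pendant edge at a leaf and ``contract'' it using Proposition~\ref{Arows}. Write $\sigma:=\sum_{i=0}^{n}x_i$, and let $v$ be a leaf of $T$ with unique neighbour $w$. Relabelling the vertices so that $v$ plays the role of ``$0$'' and $w$ of ``$1$'' (deleting the edge $\{v,w\}$ splits $T$ into the singleton $\{v\}$ and everything else, so this is legitimate), Proposition~\ref{Arows} says that $S_v-S_w$ is the vector whose entry at $v$ is $-x_v^{k-2}$ and whose every other entry equals $(\sigma-x_v)^{k-2}$. Since $\bx$ is a Steiner nullvector, the $v$th and $w$th components of $\mathcal{S}(\bx^{k-1},*)=S\bx$ vanish, i.e.\ $S_v\cdot\bx=S_w\cdot\bx=0$, whence $(S_v-S_w)\cdot\bx=0$; expanding the dot product gives $-x_v^{k-1}+(\sigma-x_v)^{k-1}=0$, that is,
$$
x_v^{\,k-1}=\Bigl(\textstyle\sum_{i\neq v}x_i\Bigr)^{k-1}.
$$
Observe that once this is upgraded to the linear identity $x_v=\sum_{i\neq v}x_i$, the ``same value'' claim is automatic: that identity forces $2x_v=\sigma$, so $x_v=\sigma/2$ no matter which leaf $v$ is. Thus the whole statement reduces to removing the ambiguity of a $(k-1)$st root of unity from the displayed equation.

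For that I would expand the scalar equation $\mathcal{S}(\bx^{k-1},*)_v=0$ itself, rather than just a difference of two components. For a leaf $v$ with neighbour $w$ and any $\bi=(i_1,\dots,i_{k-1})\in V(T)^{k-1}$, let $\bi^{w}$ be $\bi$ with every occurrence of $v$ replaced by $w$; then $d_T(v,\bi)=d_T(w,\bi^{w})+1$ unless every $i_j=v$ (in which case both sides are $0$), while for $u\neq v$ one has $d_T(u,\bi)=d_T(u,\bi^{w})$ plus $1$ if $v$ occurs among the $i_j$ and plus $0$ otherwise. Summing against the monomials $\bx^{\bi}$ --- the fibres of $\bi\mapsto\bi^{w}$ amalgamate precisely into the substitution $x_w\mapsto x_w+x_v$ --- converts the system $\mathcal{S}(\bx^{k-1},*)=\vec{0}$ into the assertion that, for the vector $\bx''\in\mathbb{C}^{V(T-v)}$ obtained from $\bx$ by deleting the coordinate $x_v$ and replacing $x_w$ by $x_w+x_v$,
$$
\mathcal{S}_{T-v}\bigl((\bx'')^{k-1},*\bigr)=\delta\,\mathbf{1},
\qquad
\delta=x_v^{\,k-1}-\sigma^{k-1}=\Bigl(\textstyle\sum_{i\neq v}x_i\Bigr)^{k-1}-\sigma^{k-1},
$$
and moreover $\sum_{u\in V(T-v)}x''_u=\sigma$ (equality of the two expressions for $\delta$ recovers the displayed $(k-1)$st-power relation). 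So $\bx''$ behaves like a ``nullvector up to a constant vector'' for the smaller tree $T-v$, and Proposition~\ref{Arows} applied to the edges of $T-v$ at $\bx''$ still gives $\sigma''(A)^{k-1}=\sigma''(B)^{k-1}$ for each edge with parts $A,B$, where $\sigma''(X):=\sum_{u\in X}x''_u$ (the constant $\delta$ cancels in differences of components). I would then run strong induction on $|V(T)|$ --- base case a single edge --- proving the slightly stronger statement for vectors $\mathbf{z}$ with $\mathcal{S}_T(\mathbf{z}^{k-1},*)$ a scalar multiple of $\mathbf{1}$, feeding the arithmetic tie between $\delta$, $x_v$ and $\sigma$ into the induction.

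The hard part is exactly this last step: $\bx''$ is not literally a Steiner nullvector of $T-v$, the residual $\delta\mathbf{1}$ being nonzero whenever $\sigma\neq 0$, so one cannot simply quote the proposition for the smaller tree. Concretely, write $x_v=\zeta\bigl(\sum_{i\neq v}x_i\bigr)$ for a $(k-1)$st root of unity $\zeta$ (possible because $k-1$ is odd, so $1+\zeta\neq 0$); then $\delta=\bigl(\sum_{i\neq v}x_i\bigr)^{k-1}\bigl(1-(1+\zeta)^{k-1}\bigr)$, and the remaining work is to exclude $\zeta\neq 1$ by combining this identity with the inductive information on $T-v$ --- this is where the parity of $k$ must enter decisively. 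Once $\zeta=1$ is established we have $x_v=\sum_{i\neq v}x_i$ for every leaf $v$, and hence, as noted above, all leaf entries coincide.
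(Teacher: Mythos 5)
Your opening paragraph is precisely the paper's argument: apply Proposition \ref{Arows} to the pendant edge $\{v,w\}$, pair the row difference $S_v-S_w$ with $\bx$, and obtain $x_v^{k-1}=\bigl(\sum_{i\neq v}x_i\bigr)^{k-1}$; the observation that the linear identity $x_v=\sum_{i\neq v}x_i$ immediately gives $x_v=\tfrac12\sum_i x_i$ for every leaf is also the paper's. Where you diverge is at the extraction of the $(k-1)$st root: the paper simply concludes $x_v=\sum_{i\neq v}x_i$ from the oddness of $k-1$ and stops. You are right that over $\mathbb{C}$ oddness of the exponent only excludes $\zeta=-1$ among the roots of unity $\zeta$ with $x_v=\zeta\sum_{i\neq v}x_i$, so you have isolated the one step the paper treats as immediate; noticing this is to your credit.

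The difficulty is that your proposed repair never closes. Everything after the first paragraph is scaffolding for the step ``exclude $\zeta\neq 1$,'' and that step is explicitly deferred rather than carried out, so as written the proposal does not prove the proposition. Worse, the scaffolding does not visibly lead to the goal. The contraction $x_w\mapsto x_w+x_v$ does correctly convert $\bx$ into a vector $\bx''$ with $\mathcal{S}_{T-v}\bigl((\bx'')^{k-1},*\bigr)=\delta\,\mathbf{1}$ (this is essentially the computation the paper performs later, in the proof of Theorem \ref{trivial}), but the only information you then propose to extract from such ``affine nullvectors'' comes from differences of components, in which $\delta$ cancels; the inductive hypothesis on $T-v$ therefore returns exactly the same power identities $\sigma''(A)^{k-1}=\sigma''(B)^{k-1}$, i.e.\ the same root-of-unity ambiguity one level down. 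Even the base case $T=K_2$ of your strengthened statement yields only $z_0^{k-1}=z_1^{k-1}$. So the induction does not break the symmetry you set out to break, and the decisive use of the parity of $k$ that you promise never materializes. Either you accept, with the paper, that $a^{k-1}=b^{k-1}$ with $k-1$ odd forces $a=b$ in the relevant setting and delete the second half of your argument, or you maintain that this needs justification over $\mathbb{C}$ --- in which case that justification is still missing from your write-up.
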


\begin{proof}
        Since $T$ is a tree on at least $2$ vertices, it must have a leaf vertex; call it $0$, and let $1$ be its unique neighbor. Proposition \ref{Arows} gives $a$ and $b$ so that \begin{align*}
            (D_0p_T - D_1p_T)(\bx) &= \textbf{h}_0(a,b)^T \bx\\
            &= \textbf{h}_0\left(-x_0^{k-2}, \left(\sum_{i=1}^nx_i\right)^{k-2}\right)^T \bx\\
            &= -x_0^{k-1} + \left(\sum_{i=1}^nx_i\right)^{k-1} = 0
        \end{align*}
    so we may conclude that $x_0^{k-1} = \left(\sum_{i=1}^nx_i\right)^{k-1}$. Since $k-1$ is odd, 
    $$
    x_0 = \sum_{i=1}^nx_i.
    $$ 
    
    From here, we see that $\sum_{i=0}^nx_i = x_0 + x_0$ so that $x_0 = \frac{1}{2}\sum_{i=0}^nx_i$. The choice of vertex $0$ was made arbitrarily among the leaves; therefore, every leaf corresponds to this same entry in the nullvector, namely, $\sum_{i=1}^n x_i$.  
\end{proof}

\begin{prop} \label{summations}
    Suppose $e = \{u,u+1\}$ is an edge of the tree $T$ on $n+1$ vertices. Let $0, \ldots, u$ lie in one component of $T - e$ and $u+1, \ldots, n$ lie in the other component. Let $\bx = [x_0, x_1, \ldots, x_n]$ be a Steiner nullvector. Then $\sum_{i=0}^ux_i = x_v$ where $v$ is any leaf vertex. 
\end{prop}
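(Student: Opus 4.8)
The plan is to read off the vector $S_u-S_{u+1}$ from Proposition \ref{Arows}, pair it against $\bx$ using the Steiner nullvector hypothesis, extract a root (here the parity of $k$ enters), and then invoke Proposition \ref{leaves} to identify the resulting common value with a leaf coordinate. No new machinery is required: the statement is essentially a corollary of the two preceding propositions.

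Concretely, since $\bx$ is a Steiner nullvector, every partial $D_j p_T$ vanishes at $\bx$; in particular $(D_u p_T - D_{u+1} p_T)(\bx) = 0$. By Proposition \ref{Arows}, $D_u p_T - D_{u+1} p_T = \textbf{h}_u(a,b)^T\bx$ with $a = -\left(\sum_{i=0}^{u} x_i\right)^{k-2}$ and $b = \left(\sum_{i=u+1}^{n} x_i\right)^{k-2}$, so pairing with $\bx$ and collapsing each of the two coordinate blocks (exactly as in the proof of Proposition \ref{leaves}) gives $-\left(\sum_{i=0}^{u} x_i\right)^{k-1} + \left(\sum_{i=u+1}^{n} x_i\right)^{k-1} = 0$. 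Because $k$ is even, $k-1$ is odd, so taking $(k-1)$-st roots in $\mathbb{C}$ forces $\sum_{i=0}^{u} x_i = \sum_{i=u+1}^{n} x_i$; adding $\sum_{i=0}^{u} x_i$ to both sides shows that each of these equals $\tfrac12 \sum_{i=0}^{n} x_i$. Finally, Proposition \ref{leaves} tells us that $x_v = \tfrac12 \sum_{i=0}^{n} x_i$ for every leaf vertex $v$, so $\sum_{i=0}^{u} x_i = x_v$, as claimed.

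I do not expect a genuine obstacle here. The only points that need care are bookkeeping ones: first, confirming that the nullvector hypothesis is being applied to the right objects, i.e.\ that all the $D_j p_T$ (equivalently all rows of $S\bx$) vanish, so that their difference $D_u p_T - D_{u+1} p_T$ does too; and second, that the ``collapse'' step correctly reassembles the degree-$(k-2)$ entry of $\textbf{h}_u$ times the corresponding block sum of the $x_i$ into a $(k-1)$-st power. The hypothesis that $k$ is even is used exactly once — to pass from equality of $(k-1)$-st powers to equality of the underlying sums.
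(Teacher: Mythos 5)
Your proof is correct and follows essentially the same route as the paper's: apply Proposition \ref{Arows} to the vanishing difference $(D_u p_T - D_{u+1} p_T)(\bx)$, use oddness of $k-1$ to equate the two block sums, and then identify the common value with a leaf entry via Proposition \ref{leaves}. The only cosmetic difference is at the end, where the paper substitutes $x_0=\sum_{i=1}^n x_i$ into the equality and deduces $\sum_{i=1}^u x_i=0$, whereas you observe that both block sums equal $\tfrac12\sum_{i=0}^n x_i = x_v$; these amount to the same computation.
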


    \begin{proof}
    Proposition \ref{Arows} gives $a$ and $b$ so that 
    \begin{align*}
        (D_up_T - D_{u+1}p_T)(\bx) &= \textbf{h}_u(a,b)^T \bx\\
        &= \textbf{h}_u\left(-\left(\sum_{i=0}^ux_i\right)^{k-2}, \left(\sum_{i=u+1}^nx_i\right)^{k-2}\right)^T \bx\\
        &= -\left(\sum_{i=0}^ux_i\right)^{k-1} + \left(\sum_{i=u+1}^nx_i\right)^{k-1} = 0
    \end{align*}
    so we may conclude that $\left(\sum_{i=0}^ux_i\right)^{k-1} = \left(\sum_{i=u+1}^nx_i\right)^{k-1}$. Since $k-1$ is odd,
    \begin{equation}\label{eq1}
    \sum_{i=0}^ux_i = \sum_{i=u+1}^nx_i.
    \end{equation}
    
    Assume without loss of generality that $0$ is a leaf vertex; then Proposition \ref{leaves} gives that $x_0 = \sum_{i=1}^nx_i$. We can rewrite (\ref{eq1}) to be $x_0 + \sum_{i=1}^ux_i = x_0 - \sum_{i=1}^ux_i$ so that $$\sum_{i=1}^ux_i = 0.$$
    Therefore, $\sum_{i=0}^ux_i = x_0$ as desired. 
    \end{proof}

    We are now in a position to describe all entries of a Steiner nullvector of a tree $T$. 

\begin{prop}\label{interior}
    Suppose $\bx = [x_0,\ldots,x_n]$ is a Steiner nullvector of a tree $T$ on $n+1$ vertices.  Then, for each $t \in V(T)$, $x_t = x_v(2-\deg(t))$ where $v$ is any leaf vertex. 
\end{prop}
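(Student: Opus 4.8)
The plan is to deduce the formula from Propositions \ref{leaves} and \ref{summations} by a straightforward counting argument on the branches hanging off the vertex $t$. Fix $t \in V(T)$, let $d = \deg(t)$, and let $e_1, \ldots, e_d$ be the edges of $T$ incident to $t$. For each $j$, the tree $T - e_j$ has exactly two components; let $C_j$ be the one not containing $t$. Because $T$ is a tree, the sets $C_1, \ldots, C_d$ are pairwise disjoint and $V(T) = \{t\} \sqcup C_1 \sqcup \cdots \sqcup C_d$; summing the coordinates of a Steiner nullvector $\bx$ over this partition gives
\[
\sum_{i=0}^n x_i \;=\; x_t + \sum_{j=1}^d \left(\sum_{i \in C_j} x_i\right).
\]

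Next I would evaluate each inner sum using Proposition \ref{summations}, which says that the coordinates of $\bx$ over either component of $T$ minus an edge sum to $x_v$ for any leaf $v$. One small check is needed before applying it to $e_j$ and $C_j$: the proof of that proposition labels a leaf lying in the chosen component as vertex $0$, so I must confirm that $C_j$ always contains a leaf of $T$. This is true because $C_j$ is a subtree: if $|C_j| = 1$ its lone vertex has degree $1$ in $T$ and is thus a leaf; if $|C_j| \geq 2$ then $C_j$ has at least two leaves, and at most one of them is the endpoint of $e_j$ in $C_j$ (the only vertex of $C_j$ whose degree in $C_j$ differs from its degree in $T$), so at least one leaf of $C_j$ is a leaf of $T$. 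Granting this, Proposition \ref{summations} gives $\sum_{i \in C_j} x_i = x_v$ for all $j$, where $v$ is any leaf of $T$ (all leaves share this value by Proposition \ref{leaves}).

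Finally, Proposition \ref{leaves} also yields $x_v = \tfrac{1}{2}\sum_{i=0}^n x_i$, i.e.\ $\sum_{i=0}^n x_i = 2x_v$. Plugging this and the branch sums into the displayed identity gives $2x_v = x_t + d\,x_v$, so $x_t = (2 - d)x_v = x_v(2 - \deg(t))$, which is the claim. I do not expect a genuine obstacle here: once Propositions \ref{leaves} and \ref{summations} are available the result is pure bookkeeping, and the only point requiring attention is the verification that every branch at $t$ contains a leaf of $T$ (in the degenerate case $|C_j| = 1$ one can instead simply quote Proposition \ref{leaves} directly for that singleton).
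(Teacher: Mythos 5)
Your proof is correct and is essentially the same argument as the paper's: decompose the vertex set around $t$ into branches, apply Proposition \ref{summations} to each branch, and apply Proposition \ref{leaves} to pin down the total. The only (cosmetic) difference is that you use all $d$ branches together with the full sum $\sum_i x_i = 2x_v$, whereas the paper works inside one side of a single edge-cut at $t$, writing it as $\{t\}$ plus $d-1$ branches summing to $x_v$; your extra check that each branch contains a leaf of $T$ is careful but not needed, since equation (\ref{eq1}) already makes Proposition \ref{summations} apply symmetrically to either component.
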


\begin{proof}
    First fix a leaf vertex $v$; note that Proposition \ref{leaves} implies that $x_v$ does not depend on which leaf is chosen. Since $\deg(v) = 1$, we have $x_v = x_v(2-\deg(v))$. 
    
    Let $t$ be any nonleaf vertex, with degree $d > 1$.  Since $T$ is a tree on at least $2$ vertices, $t$ has at least one neighbor -- call it $u$ -- and let $e$ be the edge $\{t,u\}$.  Let $\mathcal{G}$ be the set of all vertices that lie in the same component as $t$ in $T - e$. Proposition \ref{summations} tells us that $\sum_{i \in \mathcal{G}}x_i = x_v$. Partition $\mathcal{G}$ into sets $\{t\},\mathcal{G}_0, \ldots, \mathcal{G}_{d-2}$ by taking $\mathcal{G}_j$ to be vertices of the $j$-th component induced by $\mathcal{G} \setminus \{t\}$. (Note that there are only $d-1$ subsets since the component of $T - \{t\}$ containing vertex $t+1$ is not included in $\mathcal{G}$.) Once again, we can apply Proposition \ref{summations} to say that $\sum_{i \in \mathcal{G}_j} x_i = x_v$ for $0 \leq j \leq d-2$. Therefore, 
    \begin{align*}
        x_v &= \sum_{i \in \mathcal{G}} x_i \\
        &= x_t + \sum_{j=0}^{d-2} \sum_{i \in \mathcal{G}_j} x_i \\
        &= x_t + \sum_{j=0}^{d-2}x_v \\
        &= x_t + x_v(\deg(t)-1)
    \end{align*}
    and $x_t = x_v(2-\deg(t))$ as desired. 
\end{proof}

We are now equipped to prove the main result. 

\begin{theorem}\label{trivial}
    For $k$ even, the Steiner $k$-matrix of a tree on at least $2$ vertices has a nonzero hyperdeterminant. 
\end{theorem}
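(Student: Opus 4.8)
The plan is to use Theorem~\ref{thm:hyperdeterminant} to reduce the claim to the statement that $T$ has no nonzero Steiner nullvector, and then to close this out with a single clean identity. By Propositions~\ref{leaves} and~\ref{interior}, every Steiner nullvector $\bx$ of $T$ is a scalar multiple $c\,\mathbf{w}$ of the fixed vector $\mathbf{w}=(\,2-\deg(t)\,)_{t\in V(T)}$, where $c=x_v$ for any leaf $v$ (the value is leaf-independent by Proposition~\ref{leaves}); so the only remaining task is to rule out $c\neq 0$. I would do this by evaluating the Steiner polynomial $p_T$ itself at $\mathbf{w}$, showing it is nonzero, and then invoking Euler's identity. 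Note that the parity assumption on $k$ is already used inside Propositions~\ref{leaves}--\ref{interior} (via ``$k-1$ odd''); the endgame below does not need it again.

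The heart of the argument is the identity
$$ p_T(\mathbf{w}) \;=\; n\,(2^k-2), \qquad \text{where } |V(T)|=n+1 . $$
To prove it I would use the elementary edge-separation formula for Steiner distance in a tree: for $S\subseteq V(T)$, $d_T(S)=\sum_{e\in E(T)}[\,S\cap A_e\neq\emptyset\,]\,[\,S\cap B_e\neq\emptyset\,]$, where $A_e,B_e$ are the vertex sets of the two components of $T-e$. Substituting this into $p_T(\mathbf{w})=\sum_{\bi\in V(T)^k}d_T(i_1,\dots,i_k)\prod_{j=1}^k w_{i_j}$ and interchanging sums, the contribution of a fixed edge $e$ is $\sum_{\bi\in V(T)^k}[\,\{i_1,\dots,i_k\}\cap A_e\neq\emptyset\,]\,[\,\cdots\cap B_e\neq\emptyset\,]\prod_j w_{i_j}$. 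Applying inclusion--exclusion to these two events, and using $A_e\sqcup B_e=V(T)$ together with $\sum_{\bi\in X^k}\prod_j w_{i_j}=W_X^{\,k}$ where $W_X:=\sum_{t\in X}w_t$, this contribution equals $W_{V(T)}^{\,k}-W_{A_e}^{\,k}-W_{B_e}^{\,k}$. The one piece of arithmetic to get right is that $W_{V(T)}=2$ and $W_{A_e}=W_{B_e}=1$: each is an instance of the handshake identity $\sum_t(2-\deg_H(t))=2$ for a tree $H$, applied to $T$, $A_e$, and $B_e$, the extra point being that the endpoint of $e$ inside $A_e$ has exactly one fewer neighbor within $A_e$ than within $T$ (likewise for $B_e$), which is precisely the correction $2-1=1$. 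Hence each of the $n$ edges contributes $2^k-1-1=2^k-2$, giving the identity, and in particular $p_T(\mathbf{w})\neq 0$ whenever $n\geq 1$ and $k\geq 2$. (For $k=2$ this specializes to the identity $\mathbf{d}^{\top}\!D\,\mathbf{d}=2n$ for the distance matrix $D$ and degree vector $\mathbf{d}=(2-\deg t)_t$, consistent with (\ref{eq-1}).)

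To conclude: if $\bx$ is a Steiner nullvector, then $D_r p_T(\bx)=0$ for every $r$, so Euler's identity (valid since $p_T$ is homogeneous of degree $k$) gives $0=\sum_r x_r\,D_r p_T(\bx)=k\,p_T(\bx)$. Writing $\bx=c\,\mathbf{w}$ as above and using homogeneity of $p_T$ once more, $0=p_T(c\,\mathbf{w})=c^k\,p_T(\mathbf{w})=c^k\,n\,(2^k-2)$; since $n\geq 1$ and $k\geq 2$, this forces $c=0$, hence $\bx=\vec{0}$. Thus $\nabla p_T=\vec{0}$ has no nonzero solution, and Theorem~\ref{thm:hyperdeterminant} yields $\det(\mathcal{S}_T)\neq 0$.

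The only place I expect any friction is the edge-separation computation of $p_T(\mathbf{w})$: one must keep straight that the inclusion--exclusion runs over ordered $k$-tuples $\bi$ (so the relevant quantities are the $k$-th powers $W_X^{\,k}$ of weighted vertex-sums, not sums over subsets of $V(T)$), and that the degree bookkeeping $W_{A_e}=W_{B_e}=1$ is applied to the correct subtrees. Everything else — the reduction of an arbitrary nullvector to $c\,\mathbf{w}$, already supplied by Propositions~\ref{leaves}--\ref{interior}, and the final appeal to Euler's identity and Theorem~\ref{thm:hyperdeterminant} — is short and essentially formal.
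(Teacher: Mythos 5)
Your proof is correct, and it takes a genuinely different route from the paper's. Both arguments use Propositions~\ref{leaves} and~\ref{interior} to pin down any Steiner nullvector as $\bx = c\,\mathbf{w}$ with $\mathbf{w} = (2-\deg(t))_{t}$, but they diverge at the endgame. The paper argues by a leaf-deletion comparison: after normalizing $x_0=1$, it expands $D_1 p_T(\bx)$ via binomial rearrangements, matches the bulk of it with $D_1 p_{T'}$ of the leaf-deleted tree $T'$ evaluated at the shifted vector $[x_1+1,x_2,\ldots,x_n]$, and extracts the nonzero constant $2^{k-1}-1$ as the discrepancy. You instead evaluate the form itself: the edge-cut decomposition $d_T(S)=\sum_e [S\cap A_e\neq\emptyset][S\cap B_e\neq\emptyset]$ plus the handshake computations $W_{V(T)}=2$, $W_{A_e}=W_{B_e}=1$ give the clean identity $p_T(\mathbf{w})=n(2^k-2)$, and Euler's identity $\sum_r x_r D_r p_T = k\,p_T$ forces $p_T(\bx)=0$ for any nullvector, so $c^k\,n(2^k-2)=0$ and $c=0$. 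All the steps check out (the inclusion--exclusion over ordered tuples correctly produces $W_{V}^k - W_{A_e}^k - W_{B_e}^k$, and the degree correction at the endpoints of $e$ is handled properly). Your version has two advantages: it is entirely self-contained at the final step, sidestepping the paper's somewhat delicate appeal to a Steiner nullvector of the smaller tree $T'$ with a matched leaf value; and it produces a quantitative invariant, $p_T(\mathbf{w})=n(2^k-2)$, depending only on $n$ and $k$, which resonates with the paper's conjecture that the hyperdeterminant itself depends only on $n$ and $k$. The paper's route, in exchange, builds the leaf-deletion machinery that could plausibly feed an inductive computation of the hyperdeterminant's value.
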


\begin{proof}
    Using Proposition \ref{interior}, we prove the result by contradiction. 
    
    Consider any tree $T$ on $n+1$ vertices where $n \geq 1$. Assume without loss of generality that $0$ is a leaf vertex of $T$ whose neighbor is $1$. Assume the hyperdeterminant is zero. Theorem \ref{thm:hyperdeterminant} then implies that there exists a nontrivial common zero of the Steiner $k$-ideal; let $\bx = [x_0,x_1, \ldots, x_n]$ be a Steiner nullvector of $T$.  Since the generators of the Steiner $k$-ideal are homogeneous, and $x_0=0$ implies that $x_v=0$ for all $v$ by Proposition \ref{interior}, we may assume $x_0 = 1$. Then,
    \begin{align*}
    D_1p_T(\bx) &= \sum_{\bi\in \{0,\ldots, n\}^{k-1}}d_T(1, \bi)\bx^{\bi}\\
    &= \sum_{a=0}^{k-1}\sum_{\substack{\bi \in \{0,\ldots, n\}^{k-1} \\ \nu_0(\bi) = a}}d_T(1,\bi,\overbrace{0,0,\ldots, 0}^{a})\bx^{\bi}\\
    &= \sum_{a=0}^{k-1}{k-1\choose a} x_0^a\sum_{\bi \in \{1,\ldots, n\}^{k-1-a}}d_T(1,\bi,0)\bx^{\bi}\\
    &= \sum_{\bi \in \{1,\ldots, n\}^{k-1}}d_T(1, \bi)\bx^{\bi}+\sum_{a=1}^{k-1}{k-1 \choose a}\sum_{\bi \in \{1,\ldots, n\}^{k-1-a}}[d_T(1,\bi)+1]\bx^{\bi}
\end{align*}
by partitioning the vectors $\bi$ according whether $\nu_0(\bi) = 0$ or $a$ for some $a>0$.  Continuing to rewrite the quantity $D_1p_T(\bx)$,
\begin{align*}
    &= \sum_{a=0}^{k-1}{k-1\choose a} \sum_{\bi \in \{1,\ldots, n\}^{k-1-a}}d_T(1,\bi)\bx^{\bi}+\sum_{a=1}^{k-1}{k-1\choose a}\sum_{\bi \in \{1,\ldots, n\}^{k-1-a}}\bx^{\bi}\\
    &= \sum_{a=0}^{k-1}{k-1\choose a}\sum_{\bi \in \{1,\ldots, n\}^{k-1-a}}d_T(1,\bi)\bx^{\bi} + \left(1+\sum_{i=1}^{n}x_i\right)^{k-1} - \left(\sum_{i=1}^{n}x_i\right)^{k-1}\\
    &= \left(\sum_{a=0}^{k-1}{k-1\choose a} \sum_{b=0}^{k-1-a}\sum_{\substack{\bi\in \{1,\ldots, n\}^{k-1-a}\\\nu_{1}(\bi) = b}}d_T(1,\bi)\bx^{\bi}\right)+2^{k-1}-1\\
    &= \left(\sum_{a=0}^{k-1}{k-1\choose a} \sum_{b=0}^{k-1-a}{k-1-a\choose b} x_{1}^b\sum_{\bi \in \{2,\ldots, n\}^{k-1-a-b}}d_T(1,\bi)\bx^{\bi}\right)+2^{k-1}-1\\
    &= \left(\sum_{a+b=0}^{k-1}{k-1\choose a,b,k-1-a-b}x_{1}^b\sum_{\bi \in \{2,\ldots, n\}^{k-1-a-b}}d_T(1,\bi)\bx^{\bi}\right)+2^{k-1}-1\\
    &= \left(\sum_{j=0}^{k-1}{k-1\choose j}\sum_{b=0}^j{j\choose b}x_{1}^b\sum_{\bi \in \{2,\ldots, n\}^{k-1-j}}d_T(1, \bi)\bx^{\bi}\right)+2^{k-1}-1\\
    &= \left(\sum_{j=0}^{k-1}{k-1\choose j}\left(x_{1}+1\right)^j\sum_{\bi \in \{2,\ldots, n\}^{k-1-j}}d_T(1,\bi)\bx^{\bi}\right)+2^{k-1} - 1,
    \end{align*} 
    where $j$ has been introduced to replace $a+b$. If $T$ is a tree on $2$ vertices, then $D_{1}p_T(\bx) = 2^{k-1}-1 \neq 0$. Therefore, the only Steiner nullvector of a tree on $2$ vertices is the trivial one, and the hyperdeterminant must be nonzero. We continue by assuming that $T$ is a tree on at least $3$ vertices.   

    Upon deleting the leaf vertex $0$, we obtain a new tree $T'$ on $n$ vertices. Let $\bx' = [x_1',x_2', \ldots, x_{n}']$ be a Steiner nullvector of $T'$. Notice that $\deg_T(1) = \deg_{T'}(1)-1$ and $\deg_T(2) = \deg_{T'}(2), \ldots,\deg_T(n) = \deg_{T'}(n).$ Using Proposition \ref{interior} and the definition of $\bx$, by setting $x'_v$ = $x_v$ for some leaf vertex common to both $T$ and $T'$, we conclude that $\bx' = [x_1+x_0, x_2, \ldots, x_n] = [x_1+1, x_2, \ldots, x_n]$. Then \begin{align*}
        D_{1}p_{T'}(\bx') &= \sum_{\bi\in \{1,\ldots, n\}^{k-1}}d_T(1,\bi)\bx'^{\bi} \\
        &= \sum_{j=0}^{k-1}\sum_{\substack{\bi\in \{1,\ldots, n\}^{k-1}\\ \nu_{1}(\bi) = j}}d_T(1,\bi)\bx'^{\bi} \\
        &= \sum_{j=0}^{k-1}{k-1\choose j}\left(x_{1}'\right)^j\sum_{\bi \in \{2,\ldots, n\}^{k-1-j}}d_T(1,\bi)\bx^{\bi}\\
        &= \sum_{j=0}^{k-1}{k-1\choose j}\left(x_{1}+1\right)^j\sum_{\bi \in \{2,\ldots, n\}^{k-1-j}}d_T(1,\bi)\bx^{\bi}.
    \end{align*}  
    Since we assumed $\bx$ and $\bx'$ were Steiner nullvectors, $D_{1}p_T(\bx) = 0 = D_{1}p_{T'}(\bx')$. Therefore, \begin{align*}
        0 &= D_{1}p_T(\bx) - D_{1}p_{T'}(\bx')\\
        &= \left(\sum_{j=0}^{k-1}{k-1\choose j}\left(x_{1}+1\right)^j\sum_{\bi \in \{2,\ldots, n\}^{k-1-j}}d_T(1,\bi)\bx^{\bi}\right)+2^{k-1} - 1 \\
        &\quad \quad \quad - \sum_{j=0}^{k-1}{k-1\choose j}\left(x_{1}+1\right)^j\sum_{\bi \in \{2,\ldots, n\}^{k-1-j}}d_T(1,\bi)\bx^{\bi}\\
        &= 2^{k-1}-1 \neq 0.
    \end{align*}
    Therefore, the only nullvector is the trivial one, and the hyperdeterminant must be nonzero. 
\end{proof}

\bibliographystyle{plain}
\bibliography{ref}

\end{document}